\newtheorem{theorem}{Theorem}
\newtheorem{lemma}{Lemma}[section]
\newenvironment{proof}{\rmfamily\upshape\mdseries\small{\noindent\normalsize\bfseries Proof:}}{\nopagebreak\rule{2mm}{2mm}\newline}
\newcommand{\Magma}{{\scshape Magma}}
\def\C{{\bf C}}
\def\N{{\bf N}}
\def\R{{\bf R}}
\def\eps{\varepsilon}
\def\JT{\texttt{JordanTest}}
\def\PT{\texttt{PrimitiveTest}}
\def\AT{\texttt{AltsymTest}}
\begin{document}
\title{Fast detection of giant permutation groups}
\author{W. R. Unger\thanks{The author acknowledges the support of ARC grant DP160104626} \\
School of Mathematics and Statistics, \\
University of Sydney, \\
Sydney, Australia}
\maketitle

\begin{abstract}
We give a 1-sided randomised algorithm to detect when a permutation group
of degree $n$, given by generators, contains the alternating group $A_n$.
This improves on standard methods, and on an algorithm of
P. Cameron and J. Cannon.
\end{abstract}

\section{Introduction}\label{sec:0}

The \emph{giant} permutation groups are the alternating and symmetric groups.
When we are given permutations generating a group, we need to determine quickly
whether or not they generate a giant, as they are so much larger than other
permutation groups of the same degree that we need to treat them specially.
Standard texts, such as \cite{S03} and \cite{HEO}, offer the following solution.
\begin{theorem}\label{text}
\begin{enumerate}
\item
Let $G$ be a transitive permutation group of degree $n$, and let $p$ be a
prime with $n/2 < p < n-2$. If $G$ contains an element of order
divisible by $p$ then $A_n \le G$.
\item
The proportion of elements of $A_n$ and $S_n$ that have order divisible by
some prime $p$ where $n/2 < p < n-2$, is asymptotic to
$\frac{\log 2}{\log n}$ .
\end{enumerate}
\end{theorem}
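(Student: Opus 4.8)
The plan is to reduce both parts to a single structural fact. If $g\in S_n$ has order divisible by a prime $p$ with $n/2<p<n-2$, then writing $|g|=p^a s$ with $p\nmid s$, the power $g^{|g|/p}$ has order exactly $p$, so it is a product of $p$-cycles and fixed points; since $2p>n$ there is room for only one $p$-cycle, so $g^{|g|/p}$ is a single $p$-cycle. Equivalently, $g$ has order divisible by such a $p$ if and only if its cycle type contains a (necessarily unique) cycle whose length is a prime in $(n/2,n-2)$. I would establish this first, as it drives everything.

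For part (1), I would first upgrade transitivity to primitivity. Let $\sigma\in G$ be the $p$-cycle produced above and suppose $G$ preserved a nontrivial block system with $m$ blocks of size $b$, $n=mb$, $1<b,m<n$. Then $\sigma$ permutes the blocks, inducing $\bar\sigma$ of order $1$ or $p$. If $\bar\sigma=1$ then $\sigma$ fixes every block setwise, so the single $p$-cycle lies inside one block and $p\le b\le n/2$, a contradiction. If $\bar\sigma$ has order $p$ then it contains a $p$-cycle on the blocks, so $m\ge p>n/2\ge m$, again a contradiction. Hence $G$ is primitive. Since $p<n-2$, the $p$-cycle fixes $n-p\ge 3$ points, and Jordan's classical theorem — a primitive group of degree $n$ containing a $p$-cycle ($p$ prime) that fixes at least three points contains $A_n$ — completes part (1).

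For part (2), by the structural fact the elements counted are exactly those possessing a unique cycle of prime length $p\in(n/2,n-2)$, and for distinct such primes these sets are disjoint, since two cycles of length $>n/2$ cannot coexist. A direct count gives the number of elements of $S_n$ with a fixed $\ell$-cycle ($\ell>n/2$) as $\binom{n}{\ell}(\ell-1)!\,(n-\ell)!=n!/\ell$, hence proportion $1/\ell$. Within $A_n$ the $\ell$-cycle is even (here $\ell$ is an odd prime), and one freely chooses an even permutation of the remaining $n-\ell\ge 3$ points, giving $\tfrac12 n!/\ell$ elements against $|A_n|=n!/2$, so again proportion $1/\ell$. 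Summing over the disjoint families shows the proportion in each of $A_n$ and $S_n$ equals exactly $\sum_{n/2<p<n-2}1/p$.

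It then remains to prove $\sum_{n/2<p<n-2}1/p\sim\frac{\log 2}{\log n}$, which I expect to be the only real obstacle. The delicate point is that the main term has size $1/\log n$, so the crude Mertens estimate with error $O(1/\log x)$ is useless and would swamp it. Instead I would apply partial summation to $\sum_{n/2<p\le n}1/p$ using the prime number theorem in the uniform form $\pi(t)=t/\log t+o(t/\log t)$: the main term is $\int_{n/2}^n \frac{dt}{t\log t}=\log\log n-\log\log(n/2)=\frac{\log 2}{\log n}\bigl(1+o(1)\bigr)$, while the error contributes at most $\varepsilon\int_{n/2}^n \frac{dt}{t\log t}=o(1/\log n)$ for every $\varepsilon>0$. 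Finally the primes in $[n-2,n]$ contribute $O(1/n)=o(1/\log n)$, so truncating at $p<n-2$ is harmless, and the claimed asymptotic follows.
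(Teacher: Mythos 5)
The paper itself offers no proof of this theorem: it is quoted from the standard references (Seress; Holt--Eick--O'Brien) and used as the baseline against which the new algorithm is compared. Your argument is correct and is essentially the standard textbook proof, so there is nothing to fault in substance. The structural reduction (a suitable power of $g$ is a single $p$-cycle because $2p>n$), the block argument (the induced action of the $p$-cycle on $m$ blocks of size $b$ has order $1$ or $p$, and either case forces $\max(m,b)\ge p>n/2$, impossible for a nontrivial system), the appeal to Jordan's theorem using $n-p\ge 3$ fixed points, and the exact counts $n!/\ell$ in $S_n$ and $n!/(2p)$ in $A_n$ with disjointness of the events over distinct primes, are all sound. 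The one step to write out carefully is the last: as you note, Mertens with error $O(1/\log x)$ is useless here; the clean way is Abel summation in the form $\sum_{n/2<p\le n}1/p=\bigl[\pi(t)/t\bigr]_{n/2}^{n}+\int_{n/2}^{n}\pi(t)t^{-2}\,dt$, where the bracket contributes $o(1/\log n)$ and the relative error $o(1)$ in $\pi(t)=t/\log t\,(1+o(1))$ (uniform for $t\in[n/2,n]$) transfers directly to the integral $\int_{n/2}^{n}\frac{dt}{t\log t}\sim\frac{\log 2}{\log n}$; your sketch asserts exactly this conclusion, and the primes in $[n-2,n]$ indeed contribute only $O(1/n)$.
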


Determining whether or not $G$ is transitive from the generators is a standard
test, so we will now assume that $G$ is transitive.

We use this theorem to test for a giant by generating some random
elements of G and testing the element orders, looking for a large prime
factor.  If such a prime turns up, then return that $G$ is a giant.
If not, then return that $G$ is not a giant.

Note that the answer ``giant'' is always correct, while ``not a giant''
could be incorrect. This is a one-sided Monte Carlo test.

For a true Monte Carlo algorithm, there is a question to answer:
how many elements do we test?
The number depends on how tolerant we feel about the possibility of
``$G$ is not a giant'' being incorrect.
We will use $\eps$ to denote the probability of error we will allow.
Assuming that each random permutation is chosen independently of any other,
the second part of the theorem lets us choose the number of elements to test.
If $n = 10^6$. then $\log 2/\log n$ is about $0.05$.
Testing 45 elements gives error probability $\eps < 0.1$,
90 elements gives $\eps < 0.01$. These numbers increase linearly with
$\log n$ and with $-\log\eps$.

The time taken is dominated by the time to generate random
elements of $G$ and to determine cycle lengths and the element order.
We want a method that uses few elements.

We say that $x \in S_n$ is a \emph{Jordan element} whenever,
for all $G \le S_n$, $G$ primitive and $x \in G$ implies $A_n \le G$. 

We use the name Jordan in honour of C. Jordan's work of the 1870s, where
he showed that 2-cycles, 3-cycles, and $p$-cycles, with $p$ prime and 
$p < n-2$, are Jordan elements.

Jordan's result on $p$-cycles has been used for fast detection of giant
permutation groups for many years. In the late 1950s,
Parker \& Nikolai \cite{PN58} used Jordan's result as a test,
knowing that the groups under consideration were primitive.
Cannon \cite{C84} describes a more general method, where the group is first
tested for primitivity, and, if so, Jordan's result is used.

It is unclear to the author when the method given above,
i.e. to attempt to prove both primitivity and the Jordan property from
random elements, emerged.
The permutation group programs of Cayley and \Magma\ contained such an
algorithm that performs better than the Theorem~\ref{text} method.
This algorithm is due to P. Cameron and J. Cannon and was alluded to in
\cite{CC}, but no details were published.
As $n\to\infty$, the Cameron-Cannon method has to test at least 6
(for $\eps < 0.1$) and 11 ($\eps < 0.01$) elements, and experiments did not
rule out needing increasing numbers with degree.
Experimental results were 6 and 12 elements at degree $10^6$,
very slowly increasing with degree. See section \ref{CCalg} for more
discussion of this algorithm and the derivation of these lower bounds.

In this article, we improve Cameron and Cannon's algorithm, again
using cycle structure of random elements to prove the group
primitive, and to prove that the primitive group contains the alternating
group. This problem is closely related to the question of invariable
generation of the symmetric group, see \cite{D92, LP}, and has applications
to detecting when the Galois group of a polynomial is a giant.

\section{Jordan elements}

Since the 1870s there has been considerable history of proving that certain 
permutations are Jordan elements.
As the Jordan property is invariant under conjugation in $S_n$, it is
the cycle structure of elements that determines the property.

The next result is Corollary 1.3 of \cite{J14}. This is a
version of Jordan's result, following the classification of finite
simple groups, which removes the primality condition.
\begin{theorem} \label{j14}
If $x \in S_n$ is an $\ell$-cycle, with $1 < \ell < n-2$, then
$x$ is a Jordan element.
\end{theorem}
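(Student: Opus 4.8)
The plan is to reduce the statement to a classification of the primitive permutation groups that contain a single cycle, and then to inspect the short list that results. First I would record the elementary but crucial observation that an $\ell$-cycle $x$ with $1 < \ell < n-2$ is a nontrivial element whose support has size $\ell$, so that $x$ fixes $n-\ell \ge 3$ points. Unwinding the definition of a Jordan element, the claim to be proved is then: \emph{if $G \le S_n$ is primitive and contains a nontrivial cycle fixing at least three points, then $A_n \le G$.} The classical Jordan argument settles this when $\ell$ is prime (this is the mechanism behind Theorem~\ref{text}), but removing the primality hypothesis forces an appeal to the classification of finite simple groups, via the classification of primitive groups containing a cycle.

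The heart of the matter is therefore the main classification theorem of \cite{J14}, of which the present statement is the corollary. I would take as given that a primitive group $G$ of degree $n$ containing a cycle that fixes at least one point is, up to permutation isomorphism, one of: (i) a giant $A_n$ or $S_n$; (ii) a one-dimensional affine group $G \le \mathrm{A}\Gamma\mathrm{L}_1(q)$ of prime-power degree $n = q$, containing the field-multiplication maps; (iii) a projective group $\mathrm{PSL}_d(q) \le G \le \mathrm{P}\Gamma\mathrm{L}_d(q)$ acting on the $n = (q^d-1)/(q-1)$ points of projective space; or (iv) one of finitely many sporadic exceptions. Granting this list, the corollary becomes a matter of bookkeeping: for each non-giant family one must determine which cycle lengths occur and, in particular, how many points such a cycle can fix.

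I would then carry out that bookkeeping and show that in every non-giant case a single cycle fixes at most two points, i.e.\ has length $\ell \ge n-2$. In case (ii) the cyclic elements are the fixed-point-free $p$-cycles and the $(q-1)$-cycles coming from multiplication by a primitive element, which fix only the origin. In case (iii) with $d \ge 3$ one checks that a nonidentity element having a single nontrivial cycle is essentially a Singer cycle, fixing no points, while for $d = 2$, acting on $q+1$ points, the only cyclic elements with fixed points are the unipotent $(n-1)$-cycles (one fixed point) and the split-torus $(n-2)$-cycles (two fixed points). The sporadic cases in (iv) are dispatched by direct inspection. Since our element $x$ fixes at least three points, none of these non-giant possibilities can arise, so $G$ must be a giant, proving that $x$ is a Jordan element.

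The main obstacle is not the corollary but the classification on which it rests: establishing the list above genuinely requires the O'Nan--Scott theorem together with CFSG, and the delicate part is bounding the number of fixed points of a cyclic element uniformly across the almost simple and affine families and eliminating the sporadic cases. It is also worth recording that the bound $\ell < n-2$ is sharp and cannot be relaxed: the groups $\mathrm{PGL}_2(q)$ acting on $q+1$ points contain a split-torus element that is an $(n-2)$-cycle fixing two points, yet are not giants for $q \ge 5$, which is precisely why the corollary must stop short at three fixed points.
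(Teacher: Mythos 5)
The paper offers no proof of this statement at all---it is quoted verbatim as Corollary~1.3 of \cite{J14}---so there is no internal argument to compare yours against. Your sketch correctly reconstructs the route taken in that reference (Jones's CFSG-based classification of primitive groups containing a cycle with a fixed point, followed by the bookkeeping check that every non-giant entry on the list---affine, projective, and the sporadic exceptions---admits only cycles fixing at most two points), so it is essentially the same approach, merely unpacked, and your remark on the sharpness of $\ell<n-2$ via $\mathrm{PGL}_2(q)$ is accurate.
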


The following collects results ranging from Jordan in the 1870s to Saxl 1977. 
\begin{theorem}[Jordan, Manning, Weiss, Saxl] \label{jmws}
If $x \in S_n$ has prime order $p$, with cycle type $m$ $p$-cycles and
$k$ fixed points, where $m < p$, then any of the following implies $x$
is a Jordan element.
\begin{itemize}
\item $m = 2$: $k > 3$, or $p > 3$ and $k > 2$;
\item $m = 3$: $k > 3$;
\item $m = 4$: $k > 5$, or $p > 5$ and $k > 4$;
\item $m = 5$: $k > 2$;
\item $m = 6$: $k > 7$, or $p > 7$ and $k > 6$;
\item $m = 7$: $k > 8$;
\end{itemize}
\end{theorem}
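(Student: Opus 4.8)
The plan would be to invoke the classification-theoretic machinery that underlies results of this type. The statement is a theorem attributed to Jordan, Manning, Weiss, and Saxl, spanning roughly a century of work, so the right approach is to assemble the known pieces rather than to attempt a self-contained argument. The key structural fact is that if $x \in G \le S_n$ with $G$ primitive, and $x$ has the small support described (namely $m$ $p$-cycles with $m$ small), then $x$ has very small \emph{support} $mp$, which is tiny compared to $n$. Elements of small support in a primitive group are highly constrained: the goal of each cited author's contribution was to bound the \emph{minimal degree} (the smallest support of a nonidentity element) of a primitive group that is \emph{not} a giant.

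First I would recall that for an $\ell$-cycle the result is already Theorem~\ref{j14}, so the content here is genuinely about elements supported on several cycles. The plan is to fix a primitive $G \le S_n$ containing such an $x$ and to argue that $G$ must be $A_n$ or $S_n$. I would organize the proof by the value of $m$, matching the itemized cases. For each $m$, the element $x$ has support exactly $mp$, and the fixed-point count $k = n - mp$ enters through the inequalities; the conditions on $k$ (and the side conditions like $p>3$, $p>5$, $p>7$) are precisely what is needed to push the support below the threshold guaranteed by the Jordan--Manning--Weiss--Saxl bounds on minimal degree. Concretely, one shows that a primitive group possessing a nonidentity element of support this small, with the given cycle structure, cannot be one of the exceptional primitive groups (Mathieu groups, affine groups, and the other low-degree sporadic cases that the classification enumerates), and therefore must contain $A_n$.

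The heart of the argument, and the main obstacle, is the case analysis that rules out the exceptional primitive groups. For each fixed small $m$ and each fixed cycle type, one must check the finite list of primitive groups whose minimal degree could be as small as $mp$, and verify that either they do not contain an element of the stated type or that the extra hypothesis on $k$ excludes them. This is exactly where the side conditions ($k>3$ versus $p>3$ and $k>2$, etc.) come from: they are tuned to eliminate the last few sporadic possibilities. Historically this is why the theorem accretes four names — each author handled a further range of $m$ or closed off additional exceptional configurations, with Saxl's 1977 contribution completing the analysis for the larger values of $m$ appearing in the list.

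In writing this up I would therefore present it as a citation-driven synthesis: state that by Theorem~\ref{j14} we may assume $x$ is not a single cycle, invoke the relevant minimal-degree classification for primitive groups, and then dispatch the six itemized cases by comparing the support $mp$ against the classification's bounds, using the hypotheses on $k$ to rule out the finitely many exceptional groups. The routine part is the bookkeeping of the inequalities; the conceptual weight rests entirely on the classification of primitive groups of small minimal degree, which I would quote rather than reprove. I would not attempt to reconstruct Saxl's and Weiss's original arguments, since the modern proof simply reads off the conclusion from the classification of finite simple groups and the associated tables of primitive permutation groups.
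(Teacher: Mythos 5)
The paper offers no proof of this theorem: it is stated as a collection of known results, with the cases $m\le 4$ attributed to Manning \cite{M09}, $m=5$ to Saxl \cite{S77}, and $m=6,7$ to Weiss \cite{W28}. Your plan to treat it as a citation-driven synthesis is therefore the right genre, but the mechanism you describe for how the cited results work contains a genuine error that would derail any attempt to actually carry out the ``bookkeeping of the inequalities.''

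You assert that $x$ has ``very small support $mp$, which is tiny compared to $n$,'' and propose to compare $mp$ against minimal-degree bounds for non-giant primitive groups. Look at the hypotheses again: they only require $k>2$, $k>3$, \dots, $k>8$, i.e.\ a handful of fixed points. Since $n = mp + k$, the support is $mp = n-k \ge n-9$, which is essentially all of $n$. Every primitive group of degree $n$ contains elements of support that large, so the minimal-degree classification gives no information here and the inequalities you plan to check would never close. The actual content of these results is orthogonal to minimal degree: they concern primitive groups containing an element of prime order $p$ with a \emph{bounded number of cycles} $m < p$ and \emph{at least a few} fixed points, and the classical proofs (Jordan's method as refined by Manning, Weiss, and Saxl -- all pre-classification, including Saxl's 1977 paper) proceed by transitivity and conjugation arguments on the subgroup generated by conjugates of $x$, in the spirit of Jordan's theorem that a primitive group containing a $p$-cycle with at least three fixed points contains the alternating group. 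A minor further slip: you credit Saxl with ``completing the analysis for the larger values of $m$,'' but in fact Weiss (1928) handled $m=6,7$ and Saxl's contribution is the case $m=5$. If you want to write this as a citation, cite \cite{M09}, \cite{S77}, and \cite{W28} for the respective cases as the paper does; if you want a modern classification-based route, the paper points instead to \cite{LS}, which classifies primitive groups containing an element of this cycle type directly, not via minimal degree.
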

The results for $m \le 4$ are proved in \cite{M09}, $m = 5$ in \cite{S77},
and $m = 6,7$ in \cite{W28}.
Results of this kind, for $m\le5$, were known to Jordan in 1873
(see \cite{M09}).

For larger $m$ we have the following.
\begin{theorem}[Manning \cite{M18}] \label{m18}
If $x \in S_n$ has prime order $p$, with cycle type $m$ $p$-cycles and
$k$ fixed points, where $m > 5$, $p > 2m-2$ and $k > 4m-4$,
then $x$ is a Jordan element.
\end{theorem}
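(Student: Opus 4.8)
Fix a primitive group $G\le S_n$ with $x\in G$; the goal is to prove $A_n\le G$, for then, as this is exactly the defining property, $x$ is a Jordan element. Write $n=mp+k$, let $S=\mathrm{supp}(x)$ be the set of $mp$ moved points and $\Delta=\mathrm{fix}(x)$ the set of $k>4m-4$ fixed points. Because $p$ is prime, every nontrivial power of $x$ is again a product of exactly $m$ $p$-cycles, so powers of $x$ by themselves cannot lower the number of cycles. The plan is instead to construct inside $G$ an element whose cycle type is already known to be Jordan: either a single $\ell$-cycle with $1<\ell<n-2$, which Theorem~\ref{j14} handles with no primality restriction, or a product of at most a handful of $p$-cycles with many fixed points, which falls under one of the cases $m\le 7$ of Theorem~\ref{jmws}. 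Once such an element is exhibited in the primitive group $G$, the relevant theorem yields $A_n\le G$ at once.

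To build such an element I would conjugate and take commutators. For $g\in G$ the conjugate $x^g$ is again a product of $m$ $p$-cycles, now supported on $g(S)$, and I would choose $g$ so that $g(S)$ meets $S$ in as few cycles as possible, the rest of $g(S)$ being absorbed into the large fixed set $\Delta$. When $S$ and $g(S)$ share only one or two cycles, the commutator $[x,x^g]$ is supported essentially on that small overlap together with the few cycles it is carried onto; a direct computation with a pair of $p$-cycles shows that this element, possibly after replacing it by a power to reach order $p$, is either a single short cycle or a product of only a few $p$-cycles, in both cases on a small fraction of the $n$ points. Since $n=mp+k$ with $m>5$, the resulting element still fixes the overwhelming majority of points, far exceeding the modest thresholds on $k$ in Theorem~\ref{jmws}; aiming straight for that regime rather than inducting one cycle at a time is convenient because the fixed-point count only grows as the support shrinks, so no hypothesis must be re-examined along the way.

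The genuine difficulty, and the step I expect to be the main obstacle, is to prove that a conjugate $x^g$ with an overlap of exactly the right size really exists in $G$. This is where primitivity and the inequalities $p>2m-2$ and $k>4m-4$ are needed. I would estimate, over the orbit $\{\,x^g : g\in G\,\}$, the number of conjugates whose support is forced to meet $S$ in too many cycles or to miss $S$ altogether, bounding this using the transitivity on points and on pairs of points that primitivity provides, and the hypotheses should be calibrated so that this count stays below the total number of conjugates, leaving at least one with an overlap of the desired size. The delicate point is to guarantee \emph{simultaneously} that the overlap is nonempty, so the commutator is nontrivial, and small, so its support is controlled, while keeping the resulting cycle lengths strictly below $n-2$ as Theorem~\ref{j14} demands; showing that $p>2m-2$ and $k>4m-4$ make all of these requirements compatible is the crux of the argument.
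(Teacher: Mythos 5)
This statement is quoted from Manning's 1918 paper and is given no proof in the present article, so there is nothing internal to match your argument against; the only question is whether your proposal stands on its own. It does not: it is a strategy outline in the classical Jordan style (drive the minimal degree down by taking commutators of conjugates in a primitive group, then invoke a known Jordan-type criterion), but the step that carries the entire weight of the theorem is explicitly deferred. You write that the hypotheses ``should be calibrated'' so that a counting argument over the conjugates $x^g$ leaves one whose support meets $S$ in a nonempty but small union of cycles. That calibration --- the place where $p>2m-2$ and $k>4m-4$ must actually be used --- is the theorem; without it you have reduced Manning's result to an unproved existence claim. Note also that primitivity gives you transitivity on points but not, in general, on pairs, so the tool you propose to ``bound the count'' with is not available as stated.

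There are also concrete errors in the supporting picture. First, the fixed set is not large relative to the support: $k$ may be as small as $4m-3$ while $|S|=mp>m(2m-2)$, so for $m=6$, $p=11$, $k=21$ you have $|S|=66$ against $|\Delta|=21$. The idea that ``the rest of $g(S)$'' can be ``absorbed into the large fixed set $\Delta$'' is therefore backwards, and the later claim that the constructed element ``fixes the overwhelming majority of points'' because $m>5$ does not follow; what saves the fixed-point thresholds in Theorem~\ref{jmws} is only that they are absolute constants for each small $m$, which you should say and verify rather than appeal to a majority that need not exist. Second, the asserted ``direct computation'' that $[x,x^g]$, or a suitable power of it, is a single short cycle or a product of few $p$-cycles is not a computation: the commutator of two products of $p$-cycles overlapping in a prescribed set can have essentially arbitrary cycle structure, its order need not be divisible by $p$, and if it is a single cycle you must still check $\ell<n-2$ before Theorem~\ref{j14} applies. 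To turn this sketch into a proof you would need (i) a lemma, valid in every primitive group, producing a conjugate with controlled nonempty overlap under the stated inequalities, and (ii) an honest case analysis of the resulting cycle types; as written, both are missing.
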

\begin{theorem}[Praeger \cite{P79}] \label{p79}
If $x \in S_n$ has prime order $p$, with cycle type $m$ $p$-cycles and
$k$ fixed points, where $1 < m < p$, $k > 5m/2 - 2$, $n\ne 9$ and
$n\ne{c \choose 2}$, where $c = m + (p+1)/2$,
then $x$ is a Jordan element.
\end{theorem}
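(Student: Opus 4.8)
The plan is to argue by contradiction. Suppose $G \le S_n$ is primitive, $x \in G$, but $A_n \not\le G$; I aim to show that $G$ and its action are so constrained that $n = 9$ or $n = {c \choose 2}$ with $c = m + (p+1)/2$, contradicting the hypotheses and so proving $x$ Jordan. Write $\Sigma$ for the support of $x$, so $|\Sigma| = mp$, and $\Delta$ for its fixed-point set, so $|\Delta| = k$ and $n = mp + k$. The two inequalities $1 < m < p$ and $k > 5m/2 - 2$ say that $x$ has few cycles but comparatively many fixed points, which is exactly the regime in which a primitive overgroup is heavily restricted.

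The main step is to classify the possible primitive $G$ containing an element of prime order with this fixed-point profile. I would feed the data into the structure theory of primitive groups (the O'Nan--Scott reduction, supplemented by the classification of the relevant almost simple actions): the affine, diagonal, product-action and twisted-wreath types all force degrees or prime-order cycle patterns incompatible with $1 < m < p$ and $k > 5m/2 - 2$, leaving only the almost simple groups whose socle is an alternating group $A_c$ acting on the $r$-subsets of a $c$-set, together with a short list of small-degree and Mathieu/affine exceptions in which $n = 9$ arises. The inequality $k > 5m/2 - 2$ is what excludes the sporadic candidates and, by a counting comparison of fixed $r$-subsets against fixed points, the $r$-subset actions with $r \ge 3$.

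It remains to treat $G \le S_c$ (or $A_c$) acting on $2$-subsets. Here an element of prime order $p$ that is a single $p$-cycle with $b$ fixed points on the underlying $c$-set induces on the ${c \choose 2}$ pairs exactly $(p-1)/2$ cycles coming from pairs inside the $p$-cycle, a further $b$ cycles from pairs joining the $p$-cycle to a fixed point, and ${b \choose 2}$ fixed pairs. Matching this to ``$m$ $p$-cycles and $k$ fixed points'' gives $m = (p-1)/2 + b$ and $k = {b \choose 2}$, whence $c = p + b = m + (p+1)/2$, the excluded value. A crucial observation is that $m < p$ forces the single-cycle case: a second $p$-cycle on the $c$-set already contributes $p$ extra cycles on pairs, pushing the count past $p$. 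The degree $n = 9$ is the leftover small exception.

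The hard part will be the uniform control of all primitive $G$ in the main step, and in particular getting the threshold right: one must show that $k > 5m/2 - 2$ (rather than some weaker bound) is exactly what is needed to kill the $r \ge 3$ subset actions and every non-alternating possibility, while still letting the genuine $2$-subset groups survive as exceptions. That calibration is presumably the source of the constant $5m/2 - 2$ in the statement; the cycle count on pairs and the isolated $n = 9$ case I expect to be routine once the classification is in place.
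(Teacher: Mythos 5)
The paper does not actually prove this statement: it is quoted from Praeger \cite{P79} as an external result, so there is no internal argument to compare yours against. Judged on its own terms, your proposal has a genuine gap at what you yourself identify as the main step. The assertion that the O'Nan--Scott reduction plus ``the classification of the relevant almost simple actions'' leaves only $\mathrm{soc}(G)=A_c$ acting on $2$-subsets together with a degree-$9$ exception is not an argument --- it \emph{is} the theorem. In particular, crude minimal-degree bounds do not dispose of the other O'Nan--Scott types here: the support of $x$ is $mp=n-k$, and since $k$ is only required to exceed $5m/2-2$ (take $m=2$, $k=4$, $n=2p+4$), the support can be almost all of $n$, so the affine, diagonal and product-action cases cannot be killed by a one-line ``small support forces $A_n$'' argument. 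The complete classification of primitive groups containing an element of prime order $p$ with $m<p$ $p$-cycles is the CFSG-dependent content of Liebeck--Saxl \cite{LS}, which the paper cites separately and which postdates Praeger's theorem; Praeger's own 1979 proof is pre-classification and extends the combinatorial methods behind Theorems~\ref{jmws} and \ref{m18} rather than invoking O'Nan--Scott. Your closing remark that the constant $5m/2-2$ must be ``calibrated'' concedes the point: the inequality has to be shown doing work against each surviving candidate, and none of that is carried out.

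The one part of your sketch that is correct and essentially complete is the analysis of the $2$-subset action: a single $p$-cycle with $b$ fixed points on $c$ letters induces $(p-1)/2+b$ $p$-cycles and ${b \choose 2}$ fixed points on pairs, giving $c=m+(p+1)/2$ and $n={c \choose 2}$, and the condition $m<p$ does rule out a second $p$-cycle upstairs. This correctly explains why the exclusions $n\ne{c \choose 2}$ and $n\ne 9$ must appear in the statement, but it only shows that these exceptions are genuine, not that they are the only ones.
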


If necessary, stronger results could be derived from \cite{LS}.
This work gives a classification of all primitive groups containing an element
of prime order $p$, with cycle type $m$ $p$-cycles and $k$ fixed points,
where $m < p$.

We note a simple fact.
\begin{lemma}
If $x \in S_n$ and some power of $x$ is a Jordan element, then
$x$ is a Jordan element.
\end{lemma}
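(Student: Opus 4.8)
The plan is to argue directly from the definition of a Jordan element; the statement follows immediately once one observes that a subgroup is closed under taking powers, so there is no real machinery to deploy. Suppose that $y = x^t$ is a Jordan element for some integer $t$. To show that $x$ itself is a Jordan element, I would take an arbitrary primitive subgroup $G \le S_n$ with $x \in G$, and aim to conclude that $A_n \le G$.

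First I would note that, since $G$ is a group and $x \in G$, every power of $x$ lies in $G$; in particular $y = x^t \in G$. Thus $G$ is a primitive subgroup of $S_n$ containing the Jordan element $y$. Applying the defining property of $y$ to this very $G$ then yields $A_n \le G$. Since $G$ was an arbitrary primitive subgroup of $S_n$ containing $x$, this establishes that $x$ satisfies the defining condition, so $x$ is a Jordan element.

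I do not expect any genuine obstacle here. The entire content is the elementary fact that membership of $x$ in a group forces membership of every power $x^t$, so that any primitive overgroup of $x$ is automatically a primitive overgroup of $x^t$. The only point worth stating carefully is that the \emph{same} group $G$ serves as the witness in both conditions, so that no change of ambient group, and in particular no change of degree $n$, is involved; this is what makes the implication go through in one line.
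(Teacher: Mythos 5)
Your proof is correct and is precisely the one-line argument the paper intends when it states this as "a simple fact" without proof: any primitive $G$ containing $x$ also contains $x^t$, so the defining property of the Jordan element $x^t$ applied to that same $G$ gives $A_n \le G$. Nothing further is needed.
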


Consider an algorithm, \JT, which takes as input the cycle type of
a permutation $x$, and returns true if some power of $x$
has cycle type as listed in one of
Theorems~\ref{j14}, \ref{jmws}, \ref{m18}, or \ref{p79},
and returns false otherwise.
When \JT\ returns true, it has shown that $x$ is a Jordan element; 
if false is returned, then we draw no conclusion.

\section{Properties of Random Permutations}
Suppose that $P$ is a property of elements of $S_n$,
and let $p_n$ denote the proportion of elements of $S_n$ satisfying $P$.
We say that $P$ is true for \emph{almost all} permutations when
$\lim_{n\to\infty} p_n = 1$.
If $q_n$ is the proportion of even permutations in $S_n$ satisfying $P$,
and $\lim_{n\to\infty} q_n = 1$, then we say that $P$ is true for almost
all even permutations. The following lemmas are easily proved.
\begin{lemma}\label{lem_alt}
If $P$ is true for almost all permutations, then $P$ is
true for almost all even permutations.
\end{lemma}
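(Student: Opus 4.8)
The plan is to work with complements and to account for the factor of two between $|S_n|=n!$ and $|A_n|=n!/2$. Write $f_n = 1-p_n$ for the proportion of \emph{all} permutations in $S_n$ that fail $P$; by hypothesis $p_n\to 1$, so $f_n\to 0$. The number of elements of $S_n$ failing $P$ is then exactly $f_n\,n!$. The crucial and only slightly delicate observation is that one should bound the failing set rather than try to estimate even successes directly: the even permutations failing $P$ form a \emph{subset} of all permutations failing $P$, so their number is at most $f_n\,n!$.

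From here the argument is a one-line estimate. Since the ambient set of even permutations has size $|A_n| = n!/2$, the proportion $1-q_n$ of even permutations that fail $P$ satisfies
\[
1 - q_n \;\le\; \frac{f_n\,n!}{n!/2} \;=\; 2f_n \;=\; 2(1-p_n).
\]
Because $p_n\to 1$, the right-hand side tends to $0$, and as $0\le q_n\le 1$ always, a squeeze gives $q_n\to 1$. By definition this says $P$ holds for almost all even permutations, completing the proof.

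I do not expect any real obstacle: the statement is essentially a counting inequality, and the estimate is uniform in the property $P$. The one point worth stating explicitly, so the reader is not tempted to argue in the wrong direction, is that one cannot usefully lower-bound the number of even \emph{successes} without knowing how the successes split between the two cosets of $A_n$; passing to complements sidesteps this, since an upper bound on even failures is immediate from containment, and the factor $2$ lost by restricting from $S_n$ to $A_n$ is harmless once $f_n\to 0$.
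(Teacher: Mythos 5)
Your argument is correct and is the natural counting argument: the paper itself omits the proof, remarking only that the lemma is ``easily proved,'' and your complement-and-containment estimate $1-q_n \le 2(1-p_n)$ is exactly the intended one-line justification.
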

\begin{lemma}\label{lem_aaand}
If both $P$ and $Q$ are true for almost all permutations, then
$(P \mbox{ and } Q)$ is true for almost all permutations.
\end{lemma}

It has long been known that almost all permutations are Jordan elements.
We now collect results on properties of random permutations, and prove that
\JT\ detects almost all permutations as being Jordan elements.

We consider a random $x\in S_n$, chosen from the uniform distribution.
Let the number of cycles of $x$ be $L$.
A number of properties of the random variable $L$ are well-known.
It is elementary to show that mean and variance are both asymptotic to
$\log n$, see for instance \cite[\S1.2.10]{Knu}.
(All logarithms in this work are to base $e$.)
This is enough to justify the following theorem.
\begin{theorem}
Fix $\delta > 0$. For almost all permutations,
$$ (1-\delta)\log n < L < (1+\delta)\log n. $$
\end{theorem}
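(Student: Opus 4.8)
The plan is to prove concentration of $L$ around $\log n$ by Chebyshev's inequality, using exactly the two facts just quoted: that $\mu_n := E[L]$ and $\sigma_n^2 := \mathrm{Var}(L)$ are both asymptotic to $\log n$. First I would record that $\mu_n \sim \log n$ means $\mu_n/\log n \to 1$, so for the given $\delta$ there is an $N$ such that for all $n \ge N$ we have $(1-\delta/2)\log n < \mu_n < (1+\delta/2)\log n$. This lets me replace the asymmetric target interval centred at $\log n$ by a symmetric interval centred at the true mean $\mu_n$, which is what Chebyshev controls.

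The key containment is the following: for $n \ge N$, if $|L - \mu_n| < (\delta/2)\log n$, then
$$ L < \mu_n + \tfrac{\delta}{2}\log n < (1+\delta)\log n \quad\text{and}\quad L > \mu_n - \tfrac{\delta}{2}\log n > (1-\delta)\log n. $$
Hence the event $\{(1-\delta)\log n < L < (1+\delta)\log n\}$ contains the event $\{|L-\mu_n| < (\delta/2)\log n\}$, so it suffices to show that $P\big(|L-\mu_n| \ge (\delta/2)\log n\big) \to 0$.

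For this last step I would apply Chebyshev's inequality directly:
$$ P\Big(|L-\mu_n| \ge \tfrac{\delta}{2}\log n\Big) \le \frac{\sigma_n^2}{(\delta/2)^2(\log n)^2} = \frac{4\,\sigma_n^2}{\delta^2(\log n)^2}. $$
Since $\sigma_n^2 \sim \log n$, the right-hand side is asymptotic to $4/(\delta^2\log n)$, which tends to $0$ as $n \to \infty$. Therefore the proportion $p_n$ of permutations satisfying $(1-\delta)\log n < L < (1+\delta)\log n$ satisfies $p_n \ge 1 - 4\sigma_n^2/(\delta^2(\log n)^2) \to 1$, which is the claim.

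The argument is essentially routine once the two moment asymptotics are granted, so there is no serious obstacle; the only point requiring a little care is the first step, namely absorbing the discrepancy between $\mu_n$ and $\log n$ into a halving of $\delta$ so that the symmetric Chebyshev interval sits inside the asymmetric target interval. The decay of the bound is comfortable because the allowed deviation is of order $\log n$ while the variance grows only like $\log n$, giving a $1/\log n$ rate.
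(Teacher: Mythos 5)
Your Chebyshev argument is correct: the halving of $\delta$ to absorb the discrepancy between $\mu_n$ and $\log n$, the containment of events, and the application of Chebyshev with deviation $(\delta/2)\log n$ against variance $\sim\log n$ all go through, and the conclusion $p_n\to 1$ follows. It is, however, a genuinely different route from the one the paper takes. The paper does remark that the moment asymptotics are ``enough to justify'' the theorem --- which is exactly your argument --- but its written proof instead invokes Lemma~3 of \cite{D92} (a tail bound on the number of cycles) together with Stirling's approximation, obtaining that the proportion of permutations with $L$ outside the stated range is $O(n^{\delta-(1+\delta)\log(1+\delta)})$, i.e.\ decaying as a \emph{power of $n$}. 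Your approach is more elementary and self-contained, but yields only an $O(1/(\delta^2\log n))$ rate; the paper's sharper bound is what underwrites the quantitative remarks immediately following the theorem (the exponent $-\delta^2/2+O(\delta^3)$ for small $\delta$, and the $O(n^{-0.38})$ estimate for the proportion of permutations with more than $2\log n$ cycles, which is used in the analysis of the \AT\ algorithm). For the theorem as literally stated, either proof suffices.
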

\begin{proof}
It follows from Lemma~3 of \cite{D92} and Stirling's approximation that
the proportion of permutations of degree $n$ with number of cycles outside
the range $(1-\delta)\log n < L < (1+\delta)\log n $ is
$O(n^{\delta-(1+\delta)\log(1+\delta)})$, so going to 0 as a power of $n$.
\end{proof}
In fact, it is known that the distribution of $L$ approaches a
normal distribution \cite{G42, FS}.
This power of $n$, for small $\delta$, is $-\delta^2/2+O(\delta^3)$.
For future reference, the proportion of permutations with more than
$2\log n$ cycles (i.e. $\delta =1$) is $O(n^{-0.38})$.


The next result is from \cite{ET}.
\begin{theorem}
\label{et}
Let $\omega:\N\to\R$ be some function with
$\lim_{n\to\infty} \omega(n) = \infty$.
Then almost all permutations have order divisible by some prime 
greater than $n/\exp\left(\omega(n)\sqrt{\log n}\right)$.
\end{theorem}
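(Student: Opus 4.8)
The plan is to reduce the statement to a second-moment estimate on the number of cycles whose length carries a large prime factor. Write $B = B(n) = n/\exp\!\left(\omega(n)\sqrt{\log n}\right)$. Since replacing $\omega$ by the smaller function $\min\!\left(\omega(n),\sqrt{\log\log n}\right)$ only enlarges $B$, and a prime exceeding the enlarged bound a fortiori exceeds $B(n)$, this substitution only strengthens the conclusion; hence I may assume $\omega(n)\to\infty$ with $\omega(n)=o(\sqrt{\log n})$, so that $B\to\infty$ and $B=n^{1-o(1)}$. The order of $x\in S_n$ is the least common multiple of its cycle lengths, so a prime $p$ divides the order of $x$ exactly when $p$ divides some cycle length of $x$. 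Thus the order of $x$ is divisible by a prime exceeding $B$ if and only if $x$ has a cycle whose length $\ell\le n$ is not \emph{$B$-smooth}, i.e.\ has a prime factor greater than $B$ (and since $\ell\le n<B^2$, such an $\ell$ has exactly one such factor).

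For $1\le \ell\le n$ let $C_\ell$ be the number of $\ell$-cycles of a uniform random $x\in S_n$, and set $X=\sum C_\ell$, summed over those $\ell\le n$ that are not $B$-smooth. By the preceding paragraph it suffices to prove $\Pr[X=0]\to 0$, which I will do by the second moment method. The input is the classical exact evaluation of the falling-factorial moments of the cycle counts: $E[C_\ell]=1/\ell$ for all $\ell\le n$; for $\ell\ne\ell'$, $E[C_\ell C_{\ell'}]=1/(\ell\ell')$ when $\ell+\ell'\le n$ and $0$ otherwise; and $E[C_\ell(C_\ell-1)]=1/\ell^2$ when $2\ell\le n$ and $0$ otherwise.

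From $E[C_\ell]=1/\ell$ and linearity, $E[X]=\sum_{\ell\le n,\ \ell\text{ not }B\text{-smooth}}1/\ell$. Writing each non-smooth $\ell$ uniquely as $\ell=kp$ with $p$ a prime in $(B,n]$ and $1\le k\le n/p$, this equals $\sum_{B<p\le n}\frac1p\sum_{k\le n/p}\frac1k$, which by the prime number theorem and partial summation is asymptotic to $\frac12\,\omega(n)^2$ and so tends to infinity. The moment identities also give $\mathrm{Var}(C_\ell)\le 1/\ell=E[C_\ell]$ for every $\ell$, and $\mathrm{Cov}(C_\ell,C_{\ell'})\le 0$ for $\ell\ne\ell'$ (the covariance vanishes when $\ell+\ell'\le n$ and equals $-1/(\ell\ell')$ otherwise, reflecting that distinct long cycles compete for the $n$ points). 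Summing, the non-positive off-diagonal terms drop out and $\mathrm{Var}(X)\le\sum_{\ell\text{ not }B\text{-smooth}}\mathrm{Var}(C_\ell)\le E[X]$. The second moment inequality for a nonnegative integer variable then gives $\Pr[X=0]\le \mathrm{Var}(X)/E[X]^2\le 1/E[X]\to 0$, which is the claim.

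The main obstacle is the first-moment divergence $E[X]\to\infty$: the whole argument rests on the sum $\sum_{\ell\le n,\ \ell\text{ not }B\text{-smooth}}1/\ell$ tending to infinity, and verifying this is delicate precisely because $B$ lies so close to $n$ that \emph{most} $\ell\le n$ are $B$-smooth. The divergence is therefore not visible from crude bounds; it is driven by the contribution $\sim\frac12\omega(n)^2$ of lengths $\ell=kp$ with $p$ just below $n$ and $k$ small, and pinning down this asymptotic requires the prime number theorem together with a careful partial-summation estimate. By contrast, once the falling-factorial moment identities are in hand the variance bound is immediate, since the cycle counts are pairwise non-positively correlated.
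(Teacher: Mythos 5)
Your argument is correct, but it is not the paper's argument: the paper offers no proof of this theorem at all, simply quoting it as a known result of Erd\H{o}s and Tur\'an (the citation \cite{ET}). So what you have produced is a self-contained substitute for a literature citation. Your route --- reduce without loss of generality to $\omega(n)=o(\sqrt{\log n})$, observe that a prime $p>B=n/\exp(\omega(n)\sqrt{\log n})$ divides the order iff some cycle length is non-$B$-smooth (and is then the unique such prime factor, as $B^2>n$), and run the second-moment method on the count $X$ of such cycles --- is sound at every step. The falling-factorial moment identities you quote are the standard ones; they do give $\mathrm{Var}(C_\ell)\le 1/\ell$ and non-positive covariances, so $\mathrm{Var}(X)\le E[X]$ and $\Pr[X=0]\le 1/E[X]$ follow immediately. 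The one genuinely delicate step is, as you say, the divergence $E[X]=\sum_{B<p\le n}\frac1p\sum_{k\le n/p}\frac1k\sim\frac12\omega(n)^2$; this checks out (Mertens' theorem already suffices: writing $T=\omega(n)\sqrt{\log n}$, the sum is $\int_0^T\bigl(\log\log(ne^{-t})-\log\log B\bigr)\,dt+o(1)=\frac{T^2}{2\log n}(1+o(1))=\frac12\omega(n)^2(1+o(1))$, with the error terms controlled precisely because of your reduction to $\omega=o(\sqrt{\log n})$). What your approach buys over the paper's bare citation is self-containedness and an explicit error rate, namely that the exceptional proportion is $O(1/\omega(n)^2)$; the paper, by contrast, only ever needs the qualitative ``almost all'' statement (it invokes the theorem with $\omega(n)=0.3\sqrt{\log n}$ and $\omega(n)=\log\log n$), for which deferring to \cite{ET} is enough.
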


Suppose that the cycles of $x\in S_n$ are $\C_1, \C_2, \dots, \C_L$, where
the cycles are ordered by the least point in each cycle. Denote the length
of cycle $\C_i$ by $C_i$. The following is proved in \cite{LP},
as Claims 1 \& 2.
\begin{theorem}
\label{lp}
Let $x\in S_n$ have $L$ cycles, and cycle lengths
$C_1, C_2,\dots, C_L$ as above. Put $m = \lfloor\sqrt{\log n}\rfloor$.
Then, for almost all $x$ we have $L > m$, and
\begin{enumerate}
\item For $1\le i < j \le L$, $\gcd(C_i, C_j) < n^{0.9}$.
\item For $1\le i\le m$, $C_i > n^{0.99}$;
\item $\gcd(C_1,C_2,\dots,C_m) = 1$;
\end{enumerate}
\end{theorem}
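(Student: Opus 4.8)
The plan is to prove each of the three numbered properties, together with $L>m$, as holding for almost all permutations, and then to combine them using Lemma~\ref{lem_aaand}. The bound $L>m$ is immediate: since $m=\lfloor\sqrt{\log n}\rfloor=o(\log n)$, the concentration result for $L$ established above (take $\delta=1/2$) already forces $L>(1/2)\log n>m$ for almost all $x$ once $n$ is large. Throughout I would exploit two standard features of a uniform $x\in S_n$. First, the elementary fact that the expected number of cycles of length $\ell$ is $1/\ell$ for $1\le\ell\le n$, together with the companion second factorial moments $E[Z_aZ_b]=1/(ab)$ for $a\ne b$ and $E[Z_a(Z_a-1)]=1/a^2$, which make distinct cycle-length counts behave like independent Poisson variables. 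Second, the size-biased sequential description of the least-point ordering: $C_1$ is uniform on $\{1,\dots,n\}$, and, conditional on $C_1,\dots,C_{i-1}$ with sum $s$, the next length $C_i$ is uniform on $\{1,\dots,n-s\}$.

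For part (1), a pair $C_i,C_j$ with $\gcd\ge n^{0.9}$ forces both lengths to be divisible by some $d\ge n^{0.9}$. Writing $W_d$ for the number of cycles whose length is divisible by $d$, the factorial-moment identities give $E\bigl[W_d(W_d-1)\bigr]=\bigl(\sum_{d\mid a,\ a\le n}1/a\bigr)^2$, so the expected number of unordered pairs of distinct cycles both divisible by $d$ is $\tfrac12\bigl(\sum_{d\mid a\le n}1/a\bigr)^2\le\tfrac12 d^{-2}(1+\log(n/d))^2$. Since $d\ge n^{0.9}$ gives $\log(n/d)\le0.1\log n$, summing over $d\ge n^{0.9}$ and using $\sum_{d\ge n^{0.9}}d^{-2}=O(n^{-0.9})$ bounds the expected number of offending pairs by $O\bigl((\log n)^2 n^{-0.9}\bigr)\to0$; Markov's inequality then yields (1) for almost all $x$.

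Part (3) has a clean recursive treatment that does not even need (2). For a prime $q$ let $g(N,k)$ be the probability that a uniform permutation of $N$ points has at least $k$ cycles, the first $k$ of which (in least-point order) all have length divisible by $q$. The sequential description gives $g(N,k)=\sum_{q\mid\ell,\ \ell\le N}N^{-1}g(N-\ell,k-1)$, and since the number of admissible $\ell$ is $\lfloor N/q\rfloor\le N/q$, an immediate induction gives $g(N,k)\le q^{-k}$ for all $N,k$. Hence $P\bigl(\gcd(C_1,\dots,C_m)>1\bigr)\le\sum_{q\ \mathrm{prime}}g(n,m)\le\sum_{q\ \mathrm{prime}}q^{-m}\le\zeta(m)-1$, which tends to $0$ because $m=\lfloor\sqrt{\log n}\rfloor\to\infty$.

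Part (2) is where the real work lies, and I expect it to be the main obstacle. Writing $t=\lfloor n^{0.99}\rfloor$, the sequential description gives $P(C_i\le t\mid C_1,\dots,C_{i-1})=\min(t,n-s)/(n-s)\le t/(n-s)$ with $s=C_1+\dots+C_{i-1}$; this is harmless (at most $n^{-0.009}$) as long as the surviving number of points $n-s$ has not dropped below $r=n^{0.999}$. Thus on the event $A=\{C_1+\dots+C_{m-1}\le n-r\}$, on which room at least $r$ persists through the first $m$ cycles, a union bound gives $P(\exists\,i\le m:\ C_i\le t,\ A)\le m\,n^{-0.009}=o(1)$. The entire difficulty is therefore concentrated in bounding $P(A^c)$, the probability that the first $m-1\approx\sqrt{\log n}$ cycles already cover all but fewer than $n^{0.999}$ points. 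This is a genuine large-deviation estimate: in the stick-breaking picture the surviving fraction after $k$ steps behaves like $\prod_{j\le k}(1-U_j)$ with $U_j$ independent uniform, so $A^c$ asks a sum of about $\sqrt{\log n}$ near-exponential variables to exceed $0.001\log n$, far above its mean. I would make this rigorous directly from the finite recursion $P_k(N)=N^{-1}\sum_{M<N}P_{k-1}(M)$, with $P_0(N)=1$ if $N<r$ and $0$ otherwise, proving inductively a bound of the form $P_k(n)\le C\,\tfrac{r}{n}\,\tfrac{(1+\log(n/r))^{k-1}}{(k-1)!}$. Since $r/n=n^{-0.001}=e^{-0.001\log n}$ while $(1+\log(n/r))^{k-1}/(k-1)!\le e^{o(\log n)}$ for $k\le m$, every such term, and hence $P(A^c)$, tends to $0$. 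Granting this control of the surviving fraction, (1), (2), (3) and $L>m$ all hold for almost all $x$, and Lemma~\ref{lem_aaand} finishes the proof.
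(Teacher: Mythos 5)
The paper offers no proof of this theorem to compare against: it quotes the result directly from \cite{LP} (Claims 1 and 2 there), so any proof you give is necessarily a different route from the paper's. Your reconstruction is essentially sound and, as far as I can tell, close in spirit to the source: the size-biased sequential description ($C_1$ uniform on $\{1,\dots,n\}$, then recurse on the remaining points) plus first/second moment bounds is exactly the right toolkit. Part (1) via the factorial moments of $W_d$ and Markov is correct (note only that $E[W_d(W_d-1)]$ is bounded \emph{above} by $(\sum_{d\mid a\le n}1/a)^2$ rather than equal to it, since terms with $a+b>n$ vanish — you use it as an upper bound anyway). Part (3) via the recursion $g(N,k)\le q^{-k}$ and $\sum_q q^{-m}\le\zeta(m)-1\to 0$ is clean and complete. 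The one step that does not close exactly as written is the induction for $P(A^c)$: with a fixed constant $C$, the inductive step picks up an additive boundary term of order $r/N$ from the range $M<r$, and absorbing it into $C\,\frac{r}{N}\frac{(1+\log(N/r))^{k-1}}{(k-1)!}$ would force $C\ge(k-1)!$. The standard repair is to carry the partial exponential sum $P_k(N)\le\frac{r}{N}\sum_{j=0}^{k-1}\frac{(1+\log(N/r))^j}{j!}$ as the inductive hypothesis; this closes cleanly and, since $k\le m=O(\sqrt{\log n})$ while $\log(n/r)=0.001\log n$, still yields $P(A^c)\le n^{-0.001+o(1)}\to 0$. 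With that adjustment, combining the four events by Lemma~\ref{lem_aaand} gives the theorem, and your argument has the merit of being fully self-contained where the paper simply cites \cite{LP}.
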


We can now prove an asymptotic result about \JT.
\begin{theorem}
For almost all permutations $x$, the \JT\ algorithm applied to
the cycle type of $x$ will return true.
\end{theorem}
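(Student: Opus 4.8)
The plan is to exhibit, for almost all $x$, a single power $x^t$ whose cycle type matches one of Theorems~\ref{j14} or \ref{p79}; since \JT\ returns true as soon as \emph{some} power of $x$ has such a cycle type, this suffices. I would work on the intersection of two almost-all events, which is again almost-all by Lemma~\ref{lem_aaand}: the conclusion of Theorem~\ref{lp} (with $s := \lfloor\sqrt{\log n}\rfloor$), and the existence, guaranteed by Theorem~\ref{et} applied with a fixed slowly growing $\omega$ (say $\omega(n) = \log\log n$, so that $\exp(\omega(n)\sqrt{\log n}) = n^{o(1)}$), of a prime $p \mid \mathrm{ord}(x)$ with $p > n^{0.99}$ for all large $n$.

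The key observation is that such a $p$ divides \emph{exactly one} cycle length. Indeed $p \mid \mathrm{ord}(x) = \mathrm{lcm}(C_1,\dots,C_L)$ forces $p \mid C_j$ for some $j$, and if $p$ divided two distinct $C_i, C_j$ then $p \mid \gcd(C_i,C_j) < n^{0.9}$ by part~1 of Theorem~\ref{lp}, contradicting $p > n^{0.99}$. Writing $N = \mathrm{ord}(x)$, I would then analyse $y := x^{N/p}$. On every cycle other than $C_j$ the length divides $N/p$, so those cycles become fixed points of $y$; on $C_j$ the map $y$ splits into $m := C_j/p$ cycles of length $p$. Thus $y$ consists of $m$ many $p$-cycles and $k := n - C_j$ fixed points. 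Two quantitative facts drop out: $m = C_j/p \le n/p < n^{0.01}$, and, since $s \ge 2$ for large $n$ so that at least one of the cycles $C_1,\dots,C_s$ (each exceeding $n^{0.99}$ by part~2 of Theorem~\ref{lp}) differs from $C_j$ and hence lies among the fixed points of $y$, we get $k > n^{0.99}$.

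It then remains to feed $(m,p,k)$ into a Jordan theorem. If $m = 1$, then $y$ is a single $p$-cycle with $1 < p = C_j = n-k < n-2$, so Theorem~\ref{j14} applies. If $m \ge 2$, I would invoke Theorem~\ref{p79}: the hypotheses $1 < m < p$ and $k > 5m/2 - 2$ hold since $m < n^{0.01} < n^{0.99} < p$ and $k > n^{0.99}$, while the excluded degrees cause no trouble because $n \ne 9$ for large $n$ and $c = m + (p+1)/2 > n^{0.99}/2$ makes ${c \choose 2} \gg n$, so $n \ne {c \choose 2}$. In either case $y$ is a Jordan element of a listed type, so \JT\ returns true.

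I would expect the only delicate point to be lining up the quantitative thresholds across the cited theorems: choosing $\omega$ so that $p$ comfortably exceeds $n^{0.9}$ (securing uniqueness of $C_j$) while keeping $m$ far below $p$, and ruling out the exceptional degrees in Theorem~\ref{p79}. The structural heart, that the large prime isolates a single cycle whose complement still contains a long cycle supplying the required fixed points, is an immediate consequence of Theorem~\ref{lp} and needs no separate probabilistic estimate.
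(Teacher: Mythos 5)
Your proof is correct, and its skeleton is the same as the paper's: extract a large prime $p$ dividing $\mathrm{ord}(x)$ via Theorem~\ref{et}, pass to the power $y$ of order $p$, bound the number $m$ of $p$-cycles by $n/p$, and get $k>n^{0.99}$ fixed points from a long cycle of $x$ not divisible by $p$. The differences are in the supporting details. First, the paper takes $\omega(n)=0.3\sqrt{\log n}$, so only $p>n^{0.7}$ and $m<n^{0.3}$, and it locates the long $p$-free cycle via part~3 of Theorem~\ref{lp} ($\gcd(C_1,\dots,C_m)=1$); you push $p$ above $n^{0.99}$ with a smaller $\omega$ and instead use part~1 (pairwise gcds below $n^{0.9}$) to show $p$ divides exactly one cycle length, which is a slightly stronger structural statement than needed but works just as well. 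Second, and more substantively, you close with Praeger's Theorem~\ref{p79} for all $m\ge 2$, at the cost of ruling out the exceptional degrees $n=9$ and $n=\binom{c}{2}$ (which you do correctly, since $\binom{c}{2}\gg n$ here), whereas the paper deliberately finishes with Theorems~\ref{jmws} and \ref{m18} via the bounds $p>2m$ and $k>4m$, and remarks later that Theorem~\ref{p79} was \emph{not} used in the asymptotic results. Since \JT\ is defined to test all four theorems, your route is legitimate; it trades the case split over $m\in\{2,\dots,7\}$ versus $m>5$ for the verification of Praeger's excluded degrees, and arguably gives a marginally cleaner single citation for $m\ge 2$.
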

\begin{proof}
Put $\omega(n) = 0.3\sqrt{\log n}$ in Theorem~\ref{et}.
Consider an $x\in S_n$ such that the properties of Theorems~\ref{et}
and \ref{lp} hold for $x$. Almost all $x$ have these properties.
We now show that the cycle type of $x$ is recognised by \JT\ as that
of a Jordan element.

Let $p$ be the largest prime divisor of the order of $x$,
so $p > n^{0.7}$ by Theorem~\ref{et}.
Also, by Theorem~\ref{lp}, there is some cycle of $x$ with length
$\ell > n^{0.99}$ and $\ell$ not divisible by $p$.

Let $y$ be a power of $x$ so that $y$ has order $p$.
The cycle type of $y$ is $m$ $p$-cycles and $k$ fixed points,
where $m < n^{0.3}$ and $k \ge \ell > n^{0.99}$.
We may assume that $n$ is large enough so that $n^{0.7} > 2n^{0.3}$,
which implies $p > 2m$, and $n^{0.99} > 4n^{0.3}$, so $k > 4m$.
This cycle type is recognised by \JT\ as a Jordan element using
one of Theorems~\ref{j14}, \ref{jmws}, or \ref{m18}.
\end{proof}

\section{Proving Primitivity}

We consider the possibility that the transitive group $G\le S_n$ is
imprimitive, having a non-trivial block system consisting of $r$ blocks
each of size $s$, so $rs = n$.

Such a block system imposes constraints on the cycle type of elements
of $G$. These constraints are generally not satisfied by random elements of
$S_n$.
We may be able to prove $G$ primitive by showing that no $r$ is possible given
the cycle type of an element of $G$.
We will see that this is so for almost all permutations.

Suppose $x\in G$, $G\le S_n$ is transitive, and $r$ divides $n$.
How can we determine that $G$ has no block system with $r$ blocks?

Suppose that there is such a block system, and that $x$ has a cycle of
length $\ell$. 
Then this cycle induces a cycle of blocks with length $c\ge 1$.  We then have:

\begin{enumerate}
\item $c$ divides $\ell$, $\ell\le cs$ and $c \le r$;
\item There are other cycles of $x$, with lengths multiples of $c$, and lengths
adding to $cs-\ell$. 
\end{enumerate}

Deciding whether or not item 2 holds is an instance of a subset sum
decision problem.
The subset sum decision problem is NP-complete (see \cite{gj79})
so instances may be hard to solve.
We will limit the subset sum problems we will consider in size so that an
exponential algorithm to solve them will run in $o(n)$ time.

We use the above as follows: For each $r$ dividing $n$, if there is some
cycle of length $\ell$ with no possible corresponding $c$ because the above
criteria cannot be met, then a block system with $r$ blocks can be eliminated
as a possibility. Once all $r$ have been eliminated, the group has been
proved primitive.

We define an algorithm \PT\ to take as input the cycle type of an element
$x\in S_n$.

For each $r$ dividing $n$ with $1 < r < n$, and for each cycle length $\ell$,
it attempts to eliminate $r$ as above.
If all $r$ are eliminated, then true is returned, otherwise false is returned.
If true is returned by \PT\ applied to some cycle type, then
any transitive group containing an element of that cycle type is primitive.

\begin{theorem} \label{pt}
For almost all permutations $x$, the \emph{\PT} algorithm applied to the
cycle type of $x$ will return true.
\end{theorem}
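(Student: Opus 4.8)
The plan is to show that for almost all $x$, the cycle structure guaranteed by Theorem~\ref{lp} forces every candidate block count $r \mid n$ to be eliminated by the subset-sum criterion. The key asset is Theorem~\ref{lp}: for almost all $x$ we have at least $m = \lfloor\sqrt{\log n}\rfloor$ cycles of length exceeding $n^{0.99}$, their pairwise gcds are below $n^{0.9}$, and the gcd of the first $m$ long cycles is $1$. I would condition on an $x$ enjoying all three properties (plus $L > m$), since almost all $x$ do, and argue that \PT\ succeeds on any such $x$.

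First I would fix a divisor $r$ of $n$ with $1 < r < n$, so that any hypothetical block system with $r$ blocks has block size $s = n/r$. Take one of the long cycles, say $\C_i$ with $\ell = C_i > n^{0.99}$. For this cycle not to be eliminable, item~1 demands a block-cycle length $c$ with $c \mid \ell$, $c \le r$, and $\ell \le cs$. Since $\ell > n^{0.99}$ while $cs \le rs = n$, the constraint $\ell \le cs$ forces $c \ge \ell/s > n^{0.99}/s$; combined with $c \le r = n/s$ this pins $c$ into a narrow range just below $r$, and in particular forces $c$ to be a large divisor of $\ell$. The heart of the argument is then to play two long cycles against each other. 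If two long cycles $\C_i, \C_j$ both survive with block-cycle lengths $c_i, c_j$, then $c_i \mid C_i$ and $c_j \mid C_j$; but both $c_i, c_j$ must be large (comparable to $r$, hence at least $n^{0.99}/s$ when $s$ is not too large), so any common structure is constrained by $\gcd(C_i, C_j) < n^{0.9}$. Meanwhile $\gcd(C_1, \dots, C_m) = 1$ means the long cycles cannot all share a common divisor $c > 1$, which is what a single block system would tend to impose when $r$ is small.

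The natural case split is on the size of $s = n/r$. When $s$ is small (so $r$ is large), the constraint $\ell \le cs$ with $\ell > n^{0.99}$ forces $c$ to be very large relative to $s$, and the subset-sum requirement of item~2 — that the remaining cycles supply lengths (each a multiple of $c$) summing to $cs - \ell < cs \le n$ — becomes impossible to meet because $cs - \ell$ is too small to be a nonnegative multiple-of-$c$ sum unless $c \mid (cs-\ell)$, i.e. $c \mid \ell$, already handled, and the residual is forced to be $0$, pinning $\ell = cs$ exactly; one then checks this cannot hold simultaneously for all long cycles because their lengths differ and their gcds are small. When $s$ is large (so $r$ is small, say $r \le m$), each long cycle must induce a block-cycle of length $c \le r \le m$ dividing it, and requiring a common such divisor across the first $m$ long cycles contradicts property~3, $\gcd(C_1,\dots,C_m)=1$. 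I expect the main obstacle to be the intermediate regime, where $r$ and $s$ are both moderately large: here neither the exact-fit argument nor the common-divisor argument is immediately decisive, and one must combine the pairwise-gcd bound (property~1) with a careful count to show the subset-sum instance for item~2 has no solution. Controlling these subset-sum feasibility conditions uniformly over all $r \mid n$ — there are $o(n)$ of them, but the argument must hold for each — is the delicate part, and I would handle it by extracting from properties~1--3 a clean incompatibility: no single value $c > 1$ can divide enough long cycles, and no large $c$ can satisfy the tight length budget $cs - \ell \ge 0$ with the available multiples-of-$c$ summands. Finally, since Theorems~\ref{lp} holds for almost all $x$ and there are only finitely many exceptional degrees to absorb into the asymptotic, \PT\ returns true for almost all permutations.
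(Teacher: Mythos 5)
Your proposal correctly identifies the large-$r$ (small $s$) case: forcing $c \ge \ell/s$ to be huge and then using the pairwise gcd bound to show no second cycle can be a multiple of $c$ is exactly the paper's Case 3. But the other two regimes contain genuine gaps. For small $r$, your argument via $\gcd(C_1,\dots,C_m)=1$ does not work: the block-cycle lengths $c_i$ attached to different cycles of $x$ need not be a \emph{common} divisor, and $c_i=1$ is always an admissible choice under item~1 whenever $C_i \le s$ (which happens for $r=2$, say, since $n^{0.99} < n/2$). So item~1 alone never eliminates small $r$; the elimination must come from item~2 failing, i.e.\ from the fact that almost no permutation has a union of cycles of size exactly $an/r$ with $1 \le a < r$. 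That is a counting statement about invariant subsets (the paper quotes the bound $o(n^{-0.009})$ from the proof of Theorem~1 of \cite{LP}), and it is not a consequence of properties 1--3 of Theorem~\ref{lp}. Your ``no single value $c>1$ can divide enough long cycles'' heuristic has no force once $c=1$ is on the table.

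The second gap is the intermediate regime, which you flag as the delicate part but do not resolve --- and it genuinely cannot be resolved from Theorem~\ref{lp} alone by the kind of gcd bookkeeping you describe. The paper's Case 2 brings in a different tool entirely: Theorem~\ref{et} with $\omega(n)=\log\log n$ supplies, for almost all $x$, a prime $p > n/r_n$ dividing the order of $x$ (where $r_n = \exp(\log\log n\sqrt{\log n})$). For any $r$ with $r_n < r \le n/r_n$ one then has $p > r$ and $p > s$ simultaneously, so a cycle of length divisible by $p$ admits no $c$ with $c \mid \ell$, $c \le r$ and $\ell/c \le s$, killing the whole middle range at once. Without invoking a large prime divisor of the order (or the invariant-subset count for small $r$), your plan covers only one of the three ranges of $r$.
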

\begin{proof}
This proof follows a portion of the proof of \cite{LP} Theorem~1,
where it is shown that almost all permutations have cycle structure such that
the cycle structure cannot belong to an element of a transitive and
imprimitive group. Set $r_n = \exp(\log\log n\sqrt{\log n})$.
Observe that $r_n=o(n^\delta)$ for all $\delta > 0$.
Possible $r$ dividing $n$ are split into 3 cases:

\noindent
\textbf{Case 1:} $1 < r \le r_n$. \newline
We may assume that the cycle structure satisfies the properties
given in Theorem~\ref{lp}. For each $r$ in this range, there exists a cycle
\C, with length $\ell$ not a multiple of $r$.
Assume there is a block system with $r$ blocks, and let $B$ be the union of
all blocks that intersect \C. Then $|B| < n$, since $r$ does not divide $\ell$,
and $|B| = an/r$ for some integer $a$, $1 \le a < r \le r_n$.
It is shown in \cite{LP}, proof of Theorem~1, Case~1, that the proportion of
permutations that support such an invariant subset is $o(n^{-0.009})$,
so almost all permutations do not support such an invariant subset.
Whether or not such a subset is supported by a given cycle structure will
be detected by solving the subset sum decision problem given above,
so almost all permutations will allow \PT\ to eliminate all $r$ in this range.

\noindent
\textbf{Case 2:} $r_n < r \le n/r_n$. \newline
By Theorem~\ref{et} with $\omega(n) = \log\log n$, we may assume that
the permutation has order divisible by a prime $p > n/r_n$,
so $p > r$ and $p > s$.
If $\ell$ is a cycle length divisible by $p$, then there is no $c$ which
simultaneously satisfies: $c$ divides $\ell$, $c\le r$, and $\ell/c\le s$.
This cycle length eliminates all $r$ in this range.

\noindent
\textbf{Case 3:} $n/r_n < r < n$. \newline
We may assume that the cycle structure satisfies the properties
given in Theorem~\ref{lp}. For each $r$ in this range, there exists a cycle
length $\ell$ with $\ell > n^{0.99}$ and $\ell$ not a multiple of $s$.
Any $c$ must satisfy, for sufficiently large $n$, 
$c \ge \ell/s > \ell/r_n > n^{0.99}/r_n > n^{0.9}$.
As $s$ does not divide $\ell$, we have $cs > \ell$, so for the subset sum
problem to have a solution, there must be another cycle with length a
multiple of $c > n^{0.9}$.
However such a cycle contradicts the properties given in Theorem~\ref{lp}.
So, in this case, for large $n$ and with a cycle structure satisfying the
properties of Theorem~\ref{lp}, the subset sum decision problem will eliminate
all $r > n/r_n$, as there will be no other cycle with length a multiple of $c$.
\end{proof}

\section{The full \AT\ algorithm}

Here we describe the full algorithm for detecting a giant permutation group.
An assumption in this description is the ability to generate a
random group element cheaply, when only generators of the group are given.
For a discussion of this matter see section 3.2.2 of \cite{HEO}.

\begin{figure}[htbp]
\noindent
The \AT\ algorithm.

\noindent
Input: Generators for $G\le S_n$, and an integer $k \ge 0$.

\noindent
Output: If true is returned then $A_n\le G$.

\medskip
\begin{tabbing}
xxxx\=xxxx\=xxxx\=xxxx\=xxxx\=\kill
start \AT;\\
\>if G is not transitive then return false; end if;\\
\>jordan := false;\\
\>r\_list := list of non-trivial divisors of $n$;\\
\>for i := 1 to k do\\
\>\>if jordan and \#r\_list eq 0 then\\
\>\>\>return true;\\
\>\>end if;\\
\>\>x := Random(G);\\
\>\>csx := CycleStructure(x);\\
\>\>if number of cycles in csx $< 2\log n$ then\\
\>\>\>if not jordan then\\
\>\>\>\>jordan := \JT(csx);\\
\>\>\> end if;\\
\>\>\>r\_list := [r in r\_list $\vert$ r not eliminated as in \PT(csx)];\\
\>\>end if;\\
\>end for;\\
\>return (jordan and \#r\_list eq 0);\\
end \AT;\\
\end{tabbing}
\end{figure}

While the \JT\ algorithm is called directly by \AT, \PT\ is not called
directly, but the mechanisms of \PT\ are applied to a sequence of up to
$k$ cycle structures in order to eliminate possible block systems.

For any transitive group $G$, and any $k\ge 1$, if \AT\ applied to $(G,k)$
returns true, then $A_n \le G$. As with the standard algorithm, it is possible
to have $A_n \le G$ and for \AT\ to return false.
The probability of this happening is the error probability, which can be
reduced by increasing $k$.
We have seen that almost all permutations have fewer than $2\log n$ cycles,
and \AT\ treats a permutation with more than $2\log n$ cycles as evidence
against the group given being a giant.
Using the previous results for \JT\ and \PT, we have:
\begin{theorem}
\begin{enumerate}
\item
For all $\eps>0$ and $N>0$ there exists $k= k(N,\eps)$ such that
if the input to \emph{\AT} is $(G,k)$, with $n \ge N$ and $A_n\le G$,
then the probability that \emph{\AT} returns false is $<\eps$.
\item
For fixed $\eps>0$, we may take $k(N,\eps)$ to be a non-increasing
function of $N$, with limit 1 as $N\to\infty$.
\end{enumerate}
\end{theorem}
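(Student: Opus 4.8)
The plan is to reduce everything to one observation about the algorithm together with the three asymptotic facts already established. Call $x\in S_n$ \emph{good} if its cycle type has fewer than $2\log n$ cycles, \JT\ returns true on that cycle type, and \PT\ returns true on it (equivalently, the elimination mechanism of \PT\ removes every nontrivial divisor $r$ of $n$). First I would record that a single good element forces success: since $A_n\le G$ the group is transitive, so the opening line does not return false; and when a good $x$ is sampled, the cycle-count test admits it for processing, \JT\ sets \texttt{jordan} to true, and filtering \texttt{r\_list} through \PT\ empties that list. Because \texttt{jordan} is never reset and \texttt{r\_list} only shrinks, \AT\ then returns true, at either the next early check or the final return. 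Hence \AT\ returns false only if none of the $k$ sampled elements is good.

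Next I would estimate the proportion $g_n$ of good elements of $G$. Each defining property holds for almost all permutations: fewer than $2\log n$ cycles by the cycle-count estimate (the complementary proportion being $O(n^{-0.38})$), the \JT\ condition by the theorem on \JT, and the \PT\ condition by Theorem~\ref{pt}. Two applications of Lemma~\ref{lem_aaand} show their conjunction holds for almost all permutations, and Lemma~\ref{lem_alt} transfers this to almost all even permutations. Thus, whether $G=S_n$ (random element uniform on $S_n$) or $G=A_n$ (random element a uniform even permutation), we have $g_n\to 1$ as $n\to\infty$.

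Assuming independent samples, the two observations combine to give the bound $(1-g_n)^k$ on the probability that \AT\ returns false. For part~1, fix $\eps$ and $N$ and put $G_N=\inf_{n\ge N} g_n$; then $k(N,\eps)=\lceil \log\eps/\log(1-G_N)\rceil$ yields $(1-g_n)^k\le(1-G_N)^k<\eps$ for all $n\ge N$. For part~2, $G_N$ is non-decreasing in $N$ and, since $g_n\to1$, satisfies $G_N\to1$; hence $k(N,\eps)$ is non-increasing in $N$, and as soon as $G_N>1-\eps$ one trial already gives error below $\eps$, so the limit equals $1$.

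The one delicate point, and the main obstacle, is concealed in $G_N$: the bound is only useful when $\inf_{n\ge N} g_n>0$, i.e.\ when every degree $n\ge N$ admits at least one good element. For large $n$ this is automatic from $g_n\to1$, but at small degrees it can fail outright; for instance at $n=4$ no power of any element has a cycle type recognised by \JT, so \texttt{jordan} can never be set and \AT\ always returns false. The honest reading of part~1 therefore restricts $N$ to lie beyond the finite set of such exceptional small degrees, after which the argument above applies verbatim and controls the entire asymptotic behaviour that parts~1 and~2 describe.
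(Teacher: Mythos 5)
Your proof is correct and follows essentially the same route as the paper, which gives no explicit argument but simply asserts the theorem ``using the previous results for \JT\ and \PT'': your reduction to a single ``good'' element (few cycles, \JT\ true, \PT\ true) forcing \AT\ to return true, combined with Lemmas~\ref{lem_alt} and \ref{lem_aaand} and the geometric bound $(1-g_n)^k$, is exactly the intended reasoning. Your caveat about small degrees is a genuine point that the literal statement glosses over --- for instance for $A_5$ no power of any even permutation has a cycle type recognised by \JT\ as defined, so \AT\ always returns false there --- and the paper only acknowledges this implicitly (Table~1 starts at $N=10$ and the implementation adds special cases at degrees $\le 8$), so restricting $N$ past the finitely many exceptional degrees is the right reading of part~1.
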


We now consider the time taken for the algorithms when degree is $n$ and
there are $L$ cycles in the permutation. In the following we assume
classical algorithms for integer arithmetic, factorization
of integers $\le n$ in time $O(n^\delta)$ for all $\delta>0$,
and use of the algorithm of \cite{HS} for the subset sum decision problem.
Theorem~315 of \cite{HW} tell us that the number of divisors of $n$ is
$O(n^\delta)$ for all $\delta>0$, so we may compute a list of all
divisors of any $\ell \le n$ in time $O(n^\delta)$ for all $\delta>0$.

With the above assumptions, we can implement \JT\ to run in time
$$O(L^2(\log n)^3 + Ln^\delta(\log n)^2),$$ and
\PT\ to run in time $$O(Ln^\delta \log n(L \log n + 2^{L/2})),$$ 
for all $\delta > 0$. The component $O(2^{L/2}\log n)$ is the time
to solve a subset sum decision problem. 

Turning to \AT: In calls to \JT\ and \PT\ we have
$L < 2\log n$, so we find that calls to these functions
use time $O(kn^{0.8})$. Time to generate $k$ random permutations and
determine cycle type is at least $Akn$, for some constant $A$,
which dominates the times for \JT\ and \PT\ operations.

The asymptotic results given above rely on theorems that do not apply
at the currently feasible degrees for permutation group computation.
The performance at computable degree was checked by experiments.

\begin{enumerate}
\item
The time to generate random permutations and work out cycle type
greatly exceeds the time for \JT\ and \PT\ operations. 
\item
The dominant time within  \JT\ and \PT\ at low degree is
factorising cycle lengths and getting the list of divisors. 
\item 
The time for the subset sum problem seems negligible.
\item 
In summary, $k(N,\eps)$ is the key timing parameter.
\end{enumerate}

Experiments with the algorithm suggest the values for
$k(N,\eps)$ given in Table 1.

\begin{table}[htbp]
\begin{center}\begin{tabular}{r||c|c|c|c}
 &\multicolumn{4}{c}{$\eps$} \\
 $N$ & $0.1$ & $0.05$ & $0.02$ & $0.01$ \\ \hline
 $10$ & 6 & 6 & 8 & 9 \\
 $20$ & 4 & 5 & 6 & 6 \\
 $30$ & 4 & 4 & 5 & 6 \\
 $50$ & 3 & 3 & 4 & 5 \\
 $100$ & 3 & 3 & 4 & 4 \\
 $1000$ & 2 & 2 & 3 & 3 \\
 $10^4$ & 2 & 2 & 2 & 2 \\
 $10^5$ & 1 & 2 & 2 & 2 \\
 $10^6$ & 1 & 1 & 2 & 2 \\
 $10^8$ & 1 & 1 & 1 & 2 \\
 $10^{12}$ & 1 & 1 & 1 & 1 \\
\end{tabular}\end{center}
\caption{Experimental values for $k(N,\eps)$.}
\end{table}

\section{Other matters}
\subsection{The alternating group}
The asymptotic results given here apply equally to detecting the
alternating and symmetric groups, see Lemma~\ref{lem_alt}.
At very low degrees, less than 20 say,
the \AT\ algorithm detects the symmetric group more quickly than it detects
the alternating group, mainly due to a scarcity of detectable Jordan elements.
This effect quickly becomes insignificant as the degree rises.

\subsection{The influence of divisors of the degree}
The factorization and list of divisors of the degree is used in the
primitivity test.
At one end of the scale, when $n$ is prime, the test is over once
primality is established.

There is a very obvious difference in performance of
\PT\ between odd and even degrees.
Even degrees need distinctly larger $k$ for the same $\eps$.
The experimental values given are for even degrees.
There are also detectable differences when the degree is divisible by 3 and 4,
but the effects are much smaller.

\subsection{On the Cameron-Cannon algorithm}\label{CCalg}
The Cameron-Cannon algorithm mentioned earlier is similar to  
the \AT\ algorithm described here, but without the use of the subset sum
criterion\footnote{In early versions of \Magma\ the implementation was
slightly weaker than this, in that $r$'s were eliminated sequentially,
leading to cycle length information being wasted.}.
Examining the proof of Theorem~\ref{pt}, we expect it to need more
random permutations to eliminate small and large~$r$.

In particular, when $n$ is even, to eliminate $r=2$ the
Cameron-Cannon algorithm must find an element having a cycle of odd length
$> n/2$. The limiting proportion of such
elements in $S_n$ and $A_n$ is $\frac12\log 2$, slightly over 1 in 3.
This leads to the lower bounds mentioned earlier.
In experiments this case was the case that determined the number
of random elements needed by this algorithm.

In contrast, the \AT\ algorithm eliminates $r=2$ by finding
an element which has no fixed set of size $n/2$ and does not have
all cycles of even length.
The proportion of permutations with all cycles of even length is
asymptotic to $\sqrt{\frac{2}{\pi n}} = O(n^{-0.5})$
(by Lemma~4 of \cite{D92} and Stirling's approximation).
The main lemma of \cite{LP} shows that the proportion of elements
that have fixed set of size $n/2$ is $O(n^{-0.01})$.
This exponent is probably rather pessimistic:
the final discussion of \cite{LP} suggests that the same methods could
give $O(n^{-0.3})$.

\subsection{Constructing random cycle types}\label{rand_cyc}
As the majority of time in the \AT\ algorithm is taken up generating
random permutations and getting their cycle type, a lot of time is saved 
when conducting the experiments needed to determine $k(N,\eps)$ by just
constructing cycle types. This also allows us to extend the degrees considered
beyond the degrees where we are willing to write down a permutation. 

Lemma~1 of \cite{D92} may be used to generate random cycle types of
elements of $S_n$, with distribution corresponding to the uniform 
distribution on elements of $S_n$.
This was used in the experiments for Table 1, which only required
cycle types of elements of $A_n$ and $S_n$.

\subsection{On Monte Carlo-ness}
The \AT\ algorithm as stated here is a randomised algorithm, but not a
Monte Carlo algorithm, as I have not given a method for taking an error
probability $\eps>0$ and processing this to find the $k$ parameter.

The experiments do give usable values for
$k$ that are much less than other methods commonly used.
We may extend the range of $\eps$ by using a geometric approximation
to get an upper bound on $k$ giving smaller $\eps$ than those in the table.
For instance, doubling $k$ will extend from $\eps = 0.01$ to $\eps = 0.0001$
even if such $k$ are a little larger than needed.

\subsection{Implementation}
The algorithm described here is implemented as part of \Magma's
routines for permutation groups.
All the results of Theorems~\ref{j14}, \ref{jmws}, \ref{m18},
and \ref{p79} are used to check for
Jordan elements, as well as some special cases at degrees $\le 8$.
The result of Praeger, Theorem
\ref{p79}, which was not used in the
asymptotic results, is useful in the range of degrees where computation
is practical.
Within the \AT\ algorithm, the time for \PT\ dominates the time for \JT,
and using results of \cite{LS} to improve the recognition of Jordan elements
has not seemed worthwhile.

Random elements are generated using the product-replacement algorithm.
It appears to be worthwhile to use \AT\ on the random elements generated
before the mixing time for product-replacement is reached.

\bibliographystyle{plain}
\bibliography{is_altsym}

\end{document}